\documentclass[reqno,12pt]{amsart}
\usepackage{amsmath,amsthm,amscd,amssymb,graphicx,enumerate,latexsym}

\usepackage[raiselinks,colorlinks]{hyperref}
\hypersetup{citecolor=blue}

\numberwithin{equation}{section}

\theoremstyle{plain}
\newtheorem{thm}{Theorem}[section]
\newtheorem{lemma}[thm]{Lemma}

\newtheorem{cor}[thm]{Corollary}
\theoremstyle{definition}

\theoremstyle{remark}

\def\Re{\mathop{\rm Re}\nolimits}

\newcommand{\ol}{\overline}

\newcommand{\loc}{\text{\rm{loc}}}

\newcommand{\ac}{\text{\rm{ac}}}

\newcommand{\sing}{\text{\rm{sing}}}

\newcommand{\AC}{\text{\rm{AC}}}

\allowdisplaybreaks

\renewcommand{\MRhref}[2]{\href{http://www.ams.org/mathscinet-getitem?mr=#1}{#2}}
\renewcommand{\MR}[1]{}

\makeatletter
\def\@strippedMR{}
\def\@scanforMR#1#2#3\endscan{%
   \ifx#1M\ifx#2R\def\@strippedMR{#3}%
   \else\def\@strippedMR{#1#2#3}%
   \fi\fi}
\renewcommand\MR[1]{\relax\ifhmode\unskip\spacefactor3000 \space\fi
   \@scanforMR#1\endscan
   \MRhref{\@strippedMR}{MR\@strippedMR}}
\makeatother

\providecommand{\MRhref}[2]{%
  \href{http://www.ams.org/mathscinet-getitem?mr=#1}{#2}
}

\title[Derivatives of $L^p$ eigenfunctions]{Derivatives of $L^p$ eigenfunctions of Schr\"odinger operators}
\author{Milivoje Lukic}
\date\today
\address{6100 Main Street, Rice University, Mathematics MS 136, Houston, TX 77005}
\email{milivoje.lukic@rice.edu}

\subjclass[2010]{34L40,35J10,81C05}

\begin{document}

\begin{abstract}
Assuming the negative part of the potential is uniformly locally $L^1$, we prove a pointwise $L^p$ estimate on derivatives of eigenfunctions of one-dimensional Schr\"odinger operators. In particular, if an eigenfunction is in $L^p$, then so is its derivative, for $1\le p\le \infty$.
\end{abstract}

\maketitle
\section{Introduction}

In this note we study eigenfunctions $u$ of a one-dimensional Schr\"odinger operator,
\begin{equation}\label{1.1}
- u''(x) + V(x) u(x) = E u(x)
\end{equation}
where $V$ is a real-valued function and $E\in\mathbb{C}$. If $V\in L^1_\loc$, standard existence and uniqueness results for ODEs (see, e.g., Teschl~\cite[Theorem 9.1]{Teschl}) state that \eqref{1.1} has a two-dimensional space of solutions with $u, u' \in \AC_\loc$. Here $\AC_\loc$ stands for the space of functions which are absolutely continuous on  compact intervals.

In spectral theory, $L^p$ properties of solutions of \eqref{1.1} are often of interest; for example, $L^2$ solutions of \eqref{1.1} for $E\in \mathbb{R}$ correspond to eigenvalues of the Schr\"odinger operator $-\frac{d^2}{dx^2} + V(x)$. In many methods, $L^p$ properties of derivatives of solutions are also of importance.

We will prove a pointwise $L^p$ estimate on $u'$, which will provide a proof that $u\in L^p$ implies $u' \in L^p$ under a mild condition on the negative part of $V$. Our estimate will also imply that $u\in L^p$ with $p<\infty$ implies pointwise decay of $u$ and $u'$.

Throughout the paper, the condition on $V$ will be
\begin{equation}\label{1.2}
C_1 = \sup_x \int_x^{x+1} V_-(y) dy < \infty.
\end{equation}
i.e.\ that the negative part of $V$ is uniformly locally $L^1$.

\begin{thm}\label{T1.1}
Let $V\in L^1_\loc$ obey \eqref{1.2}, and let $u(x)$ be a solution of \eqref{1.1} with $E\in \mathbb{C}$.
\begin{enumerate}[(i)]
\item Denoting $C_2 = C_1+\lvert E\rvert$, there exist constants $C = C_2 + 2 \sqrt{C_2}$ and $K = 1 / \sqrt{C_2}$ such that
\begin{equation}\label{1.3}
\lvert u'(x) \rvert \le C \max_{y\in [x-K,x+K]} \lvert u(y) \rvert.
\end{equation}

\item Let $u(x)\neq  0$, $\Re [\ol{u(x)} u'(x)]\ge 0$. Then
\begin{equation}\label{1.4}
\lvert u(y) \rvert >\frac{\lvert u(x)\rvert}2\text{ for }y\in [x,x+\delta),
\end{equation}
where $\delta = - \frac 12 + \sqrt{\frac 14 + \frac 1{2C_2}}$.

\item For $1\le p<\infty$,
\begin{equation}\label{1.5}
\lvert u(x)\rvert^p \le \frac{2^p}\delta \int_{x-\delta}^{x+\delta} \lvert u(y) \rvert^p dy.
\end{equation}

\item For $1\le p<\infty$,
\begin{equation}\label{1.6}
\lvert u'(x) \rvert^p \le  \frac{2^p C^p}\delta \int_{x-K-\delta}^{x+K+\delta} \lvert u(y) \rvert^p dy.
\end{equation}

\item Let $1\le p \le \infty$ and let $w:\mathbb{R}\to (0,\infty)$ obey
\begin{equation}
\sup_{\substack{x, y\in\mathbb{R} \\ \lvert x-y\rvert \le K+\delta}} \frac{w(x)}{w(y)} < \infty.
\end{equation}
Then $u \in L^p(w(x)dx)$ implies $u'\in L^p(w(x)dx)$.

\item If $u\in L^p(dx)$ with $p<\infty$, then
\begin{equation}
\lim_{x\to \pm\infty} u(x) = \lim_{x\to\pm\infty} u'(x) = 0.
\end{equation}
\end{enumerate}
\end{thm}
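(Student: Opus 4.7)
My plan is to build everything from one key identity. Differentiating $\ol{u(y)}\, u'(y)$ and using \eqref{1.1} gives
\[
  (\ol u\, u')' = |u'|^2 + (V-E)|u|^2,
\]
so taking real parts, integrating over $[a,b]$, and separating nonnegative terms produces
\[
  \int_a^b |u'|^2 + \int_a^b V_+ |u|^2 \;=\; \Re[\ol u\, u'](b) - \Re[\ol u\, u'](a) + \int_a^b V_- |u|^2 + \Re E \int_a^b |u|^2.
\]
The left side is a sum of nonnegative terms, so every consequence of this identity depends only on $V_-$ (controlled by \eqref{1.2}) and on boundary data; the troublesome $V_+$ has been absorbed into a nonnegative quantity that can be dropped.

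For part (i), I would set $K = 1/\sqrt{C_2}$ and use the pigeonhole bound $\min_{[x-K,x+K]} |u'|^2 \le (2K)^{-1}\int_{x-K}^{x+K} |u'|^2$, combined with the energy identity above, to produce a point $y_0 \in [x-K, x+K]$ where $|u'(y_0)|$ is comparable to $M\sqrt{C_2}$, with $M = \max_{[x-K, x+K]} |u|$. Then $u'(x) = u'(y_0) - \int_x^{y_0}(V-E)u\, dz$ by \eqref{1.1}. The hard step is bounding this last integral without access to a pointwise $|V|$ bound; the natural remedy is to reapply the energy identity on $[y_0,x]$, or to integrate by parts, so that $V_+$ contributions again convert into boundary and $V_-$ terms. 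Tracking constants with $K = 1/\sqrt{C_2}$ should produce $C = C_2 + 2\sqrt{C_2}$.

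For part (ii), set $f(y) = |u(y)|^2$. Then $f(x) > 0$, $f'(x) = 2\Re[\ol u u'](x) \ge 0$, and
\[
  f''(y) = 2|u'(y)|^2 + 2(V(y) - \Re E) f(y) \ge -2(V_-(y) + |E|) f(y).
\]
Taylor's theorem with integral remainder gives $f(y) \ge f(x) - 2\int_x^y (y-z)(V_-(z) + |E|) f(z)\, dz$. Bootstrapping a crude upper bound $f \le f(x)$ on an initial subinterval and using $\int_x^y V_- \le C_1$ when $y - x \le 1$, careful bookkeeping yields an inequality of the form $f(y) \ge f(x)(1 - 2 C_2 (y-x)(1 + (y-x)))$; requiring the right side to exceed $f(x)/4$ forces $(y-x)(1 + (y-x)) \le 1/(2 C_2)$, whose largest positive root is exactly the stated $\delta$.

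Parts (iii)--(vi) now chain rapidly. For (iii), if $u(x) = 0$ the bound is trivial; otherwise, depending on the sign of $\Re[\ol u u'](x)$, either (ii) applies directly giving $|u(y)| > |u(x)|/2$ on $[x, x+\delta)$, or else (ii) applied to the reflection $v(y) := u(2x - y)$---which solves \eqref{1.1} with mirrored potential $\tilde V(y) = V(2x-y)$, still obeying \eqref{1.2}---yields the same bound on $(x-\delta, x]$; integrating $|u|^p$ over the relevant half-interval of length $\delta$ gives \eqref{1.5}. Part (iv) combines (i) with (iii) applied at the point $z^* \in [x-K, x+K]$ achieving the maximum in \eqref{1.3}. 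Part (v) follows by integrating \eqref{1.6} against $w(x)\, dx$, swapping the order of integration, and invoking the weight-comparability hypothesis; for $p = \infty$ the conclusion follows from (i) directly. For (vi), the pointwise estimates \eqref{1.5} and \eqref{1.6} have right-hand sides that tend to zero as $x \to \pm\infty$ whenever $|u|^p$ is integrable. The principal obstacle throughout is part (i)---specifically, estimating $\int_{y_0}^x (V-E) u$ without access to a pointwise $V_+$ bound---since parts (iii)--(vi) chain mechanically once (i) and (ii) are secured.
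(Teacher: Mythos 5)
Your plan for parts (iii)--(vi) matches the paper's chaining, but there is a genuine gap at the step you yourself flag as hard, in part (i), and your proposed remedies cannot close it. The energy identity, after you drop the nonnegative terms, controls only \emph{quadratic} quantities like $\int V_+ |u|^2$ (and even that requires a cutoff function: as written, your identity on $[x-K,x+K]$ has boundary terms $\Re[\ol{u}u'](x\pm K)$ that are exactly the kind of quantity you are trying to bound, so the pigeonhole step is circular until you insert a cutoff). But the step $u'(x) = u'(y_0) - \int_x^{y_0}(V-E)u\,dz$ requires a two-sided bound on the \emph{linear} integral $\int V_+ u\,dz$, and this is simply not controlled by $C_1$, $\lvert E\rvert$ and $M$: control of $\int V_+\lvert u\rvert^2$ does not give control of $\int V_+\lvert u\rvert$ (take $V_+\sim N^4$ on an interval of length $N^{-3}$ where $\lvert u\rvert\sim N^{-1/2}$; the quadratic integral is $O(1)$ while the linear one is $\sim N^{1/2}$), and no integration by parts helps since $V$ is only $L^1_\loc$ with no derivative control. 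This is precisely the obstruction the paper's Lemma~\ref{L3.1} is built to avoid: one tests $u$ against a \emph{fixed direction} $\omega$ and uses the sign hypothesis $\Re[\bar\omega u(t)]\ge 0$ to discard the $V_+$ term \emph{pointwise inside the integral}, because $V_+(s)\Re[\bar\omega u(s)]\ge 0$. This yields only a one-sided (lower) bound on $\Re[\bar\omega u(y)]$, but the paper's proof of (i) is a contradiction argument (with $\omega=u'(x)$, tracking the first zero of $f(y)=\Re[\ol{u'(x)}u(y)]$) that needs exactly such lower bounds and never needs $\int V_+\lvert u\rvert$ at all. Without this directional/sign device (or an equivalent), your outline for (i) cannot be completed.

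Two smaller problems in part (ii): your bootstrap claim ``$f\le f(x)$ on an initial subinterval'' is false in general, since $f'(x)\ge 0$ allows $f$ to increase; the paper fixes this by moving to the argmax $z$ of $\lvert u\rvert$ on $[x,y]$, where $\Re[\ol{u(z)}u'(z)]=0$, and running the estimate from $z$. Moreover, working with $f=\lvert u\rvert^2$ costs a factor of $2$ in $f''\ge -2(V_-+\lvert E\rvert)f$, so requiring $f(y)>f(x)/4$ forces $(y-x)(y-x+1)<3/(8C_2)$, not $1/(2C_2)$: your route produces a strictly smaller $\delta$ than the one stated in the theorem. The paper avoids this loss by using the linear functional $\Re[\ol{u(z)}u(t)]$ (again Lemma~\ref{L3.1}, with $\omega=u(z)$) and stopping at the first time it drops to $\lvert u(z)\rvert^2/2$, which reproduces the stated $\delta$ exactly.
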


Results of this type have appeared in the literature as technical lemmas; Stolz proved Theorem~\ref{T1.1}(v) for some weighted $L^2$ spaces in \cite[Proposition 8]{Stolz95}, and for $L^\infty$ with $E\in \mathbb{R}$ in \cite[Lemma 4]{Stolz92}.

Pointwise estimates of the type \eqref{1.6} have appeared before in the literature; Simon~\cite[Lemma 3.1]{Simon} proves such a bound for $p=2$, under the stronger condition that $V$ be uniformly locally $L^2$. 

In Section~\ref{S2}, we discuss some applications of Theorem~\ref{T1.1} to the spectral theory of Schr\"odinger operators.
In Section~\ref{S3}, we present the proof of Theorem~\ref{T1.1}.

It is my pleasure to thank David Damanik, Fritz Gesztesy, Barry Simon and G\"unter Stolz for useful discussions.

\section{Applications to spectral theory}\label{S2}

We present some applications of these estimates to spectral theory. These are not new results, but estimates of Theorem~\ref{T1.1} are relevant to their proofs. These are half-line results, so in this section, $H=-\frac{d^2}{dx^2} + V$ will be the Schr\"odinger operator on $(0,+\infty)$. We assume $0$ is a regular point, i.e. $V\in L^1(0,1)$, so $u(x)$ and $u'(x)$ have finite limits as $x\to 0$.

Our first application is to an alternative proof that bounded eigenfunctions imply absolutely continuous spectrum. We are referring to the following theorem.

\begin{thm}\label{T2.1}
Let $V\in L^1_\loc$ be a half-line potential with a regular point at $0$ which obeys \eqref{1.2} and let
\[
S = \{E \in \mathbb{R} \vert \text{solutions of \eqref{1.1} are bounded on }[0,\infty)\}.
\]
Then the spectral measure $\mu$ of $H=-\frac{d^2}{dx^2}+V(x)$ obeys
\begin{enumerate}[(i)]
\item $\mu_\sing(S) = 0$;
\item $\mu_\ac(T)>0$ for any $T\subset S$ with $\lvert T\rvert>0$ (where $\lvert \cdot \rvert$ is the Lebesgue measure).
\end{enumerate}
\end{thm}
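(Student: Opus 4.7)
The plan is to reduce Theorem~\ref{T2.1} to Gilbert--Pearson subordinacy theory, using Theorem~\ref{T1.1} to upgrade boundedness of solutions to boundedness of the associated transfer matrix. First I fix the two linearly independent solutions $u_1, u_2$ of \eqref{1.1} with initial conditions $u_1(0) = u_2'(0) = 1$ and $u_1'(0) = u_2(0) = 0$, so that the transfer matrix
\[
\Phi(x, E) = \begin{pmatrix} u_1(x) & u_2(x) \\ u_1'(x) & u_2'(x) \end{pmatrix}
\]
satisfies $\det \Phi \equiv 1$ by constancy of the Wronskian. For $E \in S$, both $u_1$ and $u_2$ are bounded on $[0, \infty)$ by definition of $S$, and applying Theorem~\ref{T1.1}(v) with $w \equiv 1$ and $p = \infty$ gives that $u_1'$ and $u_2'$ are bounded as well. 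Hence $\sup_{x \ge 0} \|\Phi(x, E)\| =: M(E) < \infty$ for every $E \in S$, and since $\det \Phi \equiv 1$ the same bound applies to $\|\Phi(x,E)^{-1}\|$.

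The next step is to show that for $E \in S$, no nontrivial solution $u$ of \eqref{1.1} is subordinate at $\infty$, i.e.\ $\|u\|_{L^2(0,N)}$ is bounded above and below by constant multiples of $\sqrt{N}$. The upper bound follows immediately from the identity $(u(x), u'(x))^T = \Phi(x,E) (u(0), u'(0))^T$ and the bound on $\|\Phi\|$. For the lower bound, the bound on $\|\Phi^{-1}\|$ forces $|u(x)|^2 + |u'(x)|^2 \ge c > 0$ uniformly in $x$; Theorem~\ref{T1.1}(ii) then converts this into $\int_x^{x+\delta} |u(y)|^2\, dy \ge c'$ on each unit interval, with a short auxiliary argument handling the case when $|u'(x)|$ dominates $|u(x)|$, using that $u'$ is bounded and absolutely continuous. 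Summing over a partition of $[0, N]$ yields $\|u\|_{L^2(0,N)}^2 \ge c'' N$.

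Finally, I invoke the Gilbert--Pearson subordinacy theorem: on the set where no solution is subordinate at $\infty$, the singular part of the spectral measure vanishes, while any Lebesgue-positive subset of this set lies in the essential support of $\mu_\ac$ and hence receives positive $\mu_\ac$ mass. Since every $E \in S$ enjoys this non-subordinacy, we obtain part (i), $\mu_\sing(S) = 0$, and part (ii), $\mu_\ac(T) > 0$ whenever $T \subset S$ has $|T| > 0$. The main obstacle is precisely the $L^2$ lower bound in step two: this is where Theorem~\ref{T1.1}(ii) plays its essential role, converting the Wronskian-based pointwise lower bound on $|u| + |u'|$ into an integral lower bound on $|u|$ alone. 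Alternatively, one could invoke a packaged ``bounded transfer matrix implies absolutely continuous spectrum'' result, such as the theorem of Last--Simon, to bypass these details entirely once the transfer matrix bound of step one is established.
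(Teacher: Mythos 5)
Your argument is essentially a reconstruction of the Behncke--Stolz proof, which the paper cites as the \emph{original} proof of Theorem~\ref{T2.1} but deliberately does not follow. The paper's own route is different: it takes Simon's direct proof \cite{Simon}, which avoids subordinacy theory altogether and instead estimates the spectral measure via the Weyl $m$-function at \emph{complex} energies, and repairs the one place where Simon needs $V$ uniformly locally $L^2$ (the local bound of $u'$ by $u$) by substituting \eqref{1.6}. This is precisely why the paper emphasizes that Theorem~\ref{T1.1} holds for $E\in\mathbb{C}$: Simon's method needs \eqref{1.6} at non-real $E$. Your route instead feeds boundedness of solutions into Gilbert--Pearson theory \cite{GilbertPearson}, exactly as Behncke \cite{Behncke} and Stolz \cite{Stolz92} did; it only requires the estimates of Theorem~\ref{T1.1} at real energies, but it outsources the spectral-theoretic heavy lifting to subordinacy theory (or, in your alternative ending, to Last--Simon, which is not in the paper's bibliography) as a black box. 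Both routes are legitimate; yours trades self-containedness for weaker demands on the estimates.

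There is, however, one step you have not actually justified: the lower bound $\int_x^{x+\delta}\lvert u\rvert^2\,dy\ge c'$ in the case where $\lvert u'(x)\rvert$ dominates. You propose to handle it ``using that $u'$ is bounded and absolutely continuous,'' but regularity and boundedness of $u'$ alone cannot yield this for complex-valued solutions: $u(x)=\epsilon e^{ix/\epsilon}$ has $\lvert u\rvert^2+\lvert u'\rvert^2\ge 1$ with $u'$ smooth and bounded, yet $\int_0^1\lvert u\rvert^2\,dx=\epsilon^2$ is arbitrarily small. (This $u$ solves \eqref{1.1} only with $V-E=-\epsilon^{-2}$, so it is the equation together with \eqref{1.2}, not smoothness of $u'$, that excludes such behavior.) The clean fix uses tools you already invoked: by Theorem~\ref{T1.1}(i), $\lvert u'(x)\rvert\ge\sqrt{c/2}$ forces $\max_{[x-K,x+K]}\lvert u\rvert\ge\sqrt{c/2}/C$, and then \eqref{1.5} with $p=2$, applied at the maximizing point $y$ (part (iii) needs no sign condition, unlike part (ii)), gives $\int_{y-\delta}^{y+\delta}\lvert u\rvert^2\ge\frac{\delta}{4}\cdot\frac{c}{2C^2}$; summing over windows of length $2(K+\delta)$ yields $\lVert u\rVert_{L^2(0,N)}^2\ge c''N$. (Alternatively, one may restrict to real solutions, for which your elementary argument can be completed, since subordinate solutions may be chosen real when $E$ and $V$ are real.) Two minor points: Gilbert--Pearson theory requires the limit point case at $\infty$, which is automatic here because the lower bound just proved shows no solution is $L^2$ whenever $S\neq\emptyset$; and Theorem~\ref{T1.1} is stated for whole-line solutions, so near $x=0$ one should either extend $V$ by zero to $x<0$ or note that only the behavior as $x\to\infty$ matters.
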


This theorem was first proved by Behncke~\cite{Behncke} and Stolz~\cite{Stolz92}, who proved that \eqref{1.2} and boundedness of eigenfunctions for $E\in S$ allows one to use the subordinacy theory of Gilbert--Pearson~\cite{GilbertPearson} to imply the conclusions of the above theorem.

A more direct proof was found by Simon~\cite{Simon}. However, the proof in \cite{Simon} assumes that $V$ is uniformly locally $L^2$ in order to bound $u'$ locally in terms of $u$. Replacing that part of the argument by  \eqref{1.6}, the proof in \cite{Simon} generalizes to all potentials $V$ included by Theorem~\ref{T2.1}. It should be noted that this method needs the estimate \eqref{1.6} for non-real energies $E$, which Theorem~\ref{T1.1} provides.

In the remainder of this section, we point out some simple criteria for point spectrum. These criteria use the implication
\begin{equation}\label{2.1}
u \in L^2 \implies u' \in L^2.
\end{equation}
This is a special case of Theorem~\ref{T1.1}(v), but we remind the reader that it was previously proved by Stolz~\cite[Proposition 8]{Stolz95}.

Simon--Stolz~\cite{SimonStolz} provide a criterion for absence of eigenvalues in terms of transfer matrices. The transfer matrix $T(E,x,y)$ is defined by
\[
T(E,x,y) \begin{pmatrix} u(y) \\ u'(y) \end{pmatrix} =  \begin{pmatrix} u(x) \\ u'(x) \end{pmatrix}.
\]
for solutions $u$ of \eqref{1.1}. The Simon--Stolz criterion uses the condition
\begin{equation}\label{2.2}
\int_0^\infty \frac{dx}{\lVert  T(E,x,0) \rVert^2} = \infty
\end{equation}
to prove that \eqref{1.1} has no $L^2$ solution.  Their theorem also assumes $V$ is bounded from below, but their proof, combined with the implication \eqref{2.1}, gives

\begin{cor}
Let $V\in L^1_\loc$ be a half-line potential with a regular point at $0$ which obeys \eqref{1.2} and let $E\in \mathbb{R}$ be such that \eqref{2.2} holds. Then $-\Delta+V$, as a Schr\"odinger operator on $L^2(\mathbb{R}^+)$, doesn't have an eigenvalue at $E$.
\end{cor}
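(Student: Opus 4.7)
The plan is to argue by contradiction, following the Simon--Stolz template but with Theorem~\ref{T1.1}(v) replacing their semi-boundedness hypothesis on $V$. Suppose $E$ is an eigenvalue of $-\Delta + V$ on $L^2(\mathbb{R}^+)$; then there is a nontrivial solution $u$ of \eqref{1.1} with $u \in L^2(\mathbb{R}^+)$. Applying Theorem~\ref{T1.1}(v) with $p = 2$ and $w \equiv 1$ gives $u' \in L^2(\mathbb{R}^+)$ as well, so that $\phi(x) := (u(x), u'(x))^{T}$ defines an element of $L^2(\mathbb{R}^+; \mathbb{C}^2)$. The goal then becomes to show that \eqref{2.2} forces $\int_0^\infty \|\phi(x)\|^2\, dx = \infty$, which will contradict $\phi \in L^2$ and finish the proof.

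The required lower bound on $\|\phi(x)\|$ comes from constancy of the Wronskian of solutions of \eqref{1.1}, which implies $\det T(E,x,0) = 1$ for every $x \ge 0$. Since the singular values of a $2 \times 2$ matrix of determinant one are reciprocals of each other, its operator norm equals that of its inverse, i.e.\ $\|T(E,x,0)^{-1}\| = \|T(E,x,0)\|$. Applied to the identity $\phi(0) = T(E,x,0)^{-1}\phi(x)$, and noting that $\phi(0) \neq 0$ by ODE uniqueness plus nontriviality of $u$, this yields
\[
\|\phi(x)\|^2 \;\ge\; \frac{\|\phi(0)\|^2}{\|T(E,x,0)\|^2}.
\]
Integrating over $(0,\infty)$ and invoking \eqref{2.2} gives $\int_0^\infty \|\phi(x)\|^2\, dx = \infty$, which is the desired contradiction.

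The main obstacle is really not in the Wronskian manipulation above, which is short and standard, but in the step $u \in L^2 \Rightarrow u' \in L^2$ used to place $\phi$ in $L^2$. This is the ingredient that Simon--Stolz secured by assuming $V$ bounded below, and that is supplied here, under the milder condition \eqref{1.2}, by Theorem~\ref{T1.1}(v); with it in hand, the corollary reduces to the few lines above.
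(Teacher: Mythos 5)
Your proof is correct and follows the same route as the paper: the key point in both is that Theorem~\ref{T1.1}(v) (i.e.\ the implication \eqref{2.1}) replaces Simon--Stolz's semi-boundedness assumption, forcing $(u,u')^T \in L^2$ for an alleged eigenfunction and contradicting the transfer-matrix lower bound derived from \eqref{2.2}. The only difference is cosmetic: the paper cites the Simon--Stolz argument as going through unchanged, while you spell out its Wronskian/singular-value details (correctly).
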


\begin{proof}
The argument of Simon--Stolz~\cite[Theorem 2.1]{SimonStolz} goes unchanged to prove $\left \lVert \begin{pmatrix} u(x) \\ u'(x) \end{pmatrix}\right \rVert \notin L^2$ for any solution of \eqref{1.1}. 
\eqref{2.1} then implies $u\notin L^2$, so $E$ is not an eigenvalue of $-\Delta+V$.
\end{proof}

For a real-valued non-zero solution of \eqref{1.1} and $E=k^2>0$, Pr\"ufer variables are defined by
\begin{align*}
u'(x) & = k R_k(x) \cos \theta_k(x) \\
u(x) & = R_k(x) \sin \theta_k(x)
\end{align*}
with $R_k(x)>0$, $\theta_k(x) \in \mathbb{R}$. They were first introduced by Pr\"ufer \cite{Prufer} and have found extensive use in spectral theory, see e.g.\ Kiselev--Last--Simon~\cite{KiselevLastSimon}.
Note that
\begin{equation}\label{2.3}
k^2 R_k(x)^2 = u'(x)^2 + k^2 u(x)^2.
\end{equation}
The following corollary is immediate from \eqref{2.3} and \eqref{2.1}.

\begin{cor}
Let $V\in L^1_\loc$ be a half-line potential with a regular point at $0$ which obeys \eqref{1.2} and let $E=k^2>0$. Then $u\in L^2$ if and only if $R_k \in L^2$.
\end{cor}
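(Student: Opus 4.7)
The plan is to read the two directions directly off the identity \eqref{2.3}, namely
\[
k^2 R_k(x)^2 = u'(x)^2 + k^2 u(x)^2,
\]
which says that $R_k \in L^2$ is equivalent to $(u,u') \in L^2 \oplus L^2$. Since $k>0$ is a fixed constant, this reduces the corollary to checking that $L^2$ membership of $u$ alone already forces $u' \in L^2$.

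For the forward direction ($u \in L^2 \Rightarrow R_k \in L^2$), I would invoke \eqref{2.1} (the $p=2$ case of Theorem~\ref{T1.1}(v) with weight $w\equiv 1$) to conclude $u' \in L^2$, and then read from \eqref{2.3} that $k^2 R_k^2 = (u')^2 + k^2 u^2 \in L^1$, hence $R_k \in L^2$.

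For the reverse direction ($R_k \in L^2 \Rightarrow u \in L^2$), the identity \eqref{2.3} gives the pointwise bound $k^2 u(x)^2 \le k^2 R_k(x)^2$, so $u \in L^2$ follows from $R_k \in L^2$ with no further input.

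No step here is an obstacle: the only non-trivial ingredient is the implication \eqref{2.1}, which is Theorem~\ref{T1.1}(v) and is therefore already available. Everything else is the algebraic identity \eqref{2.3} together with the positivity of $k$.
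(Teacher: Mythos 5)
Your proof is correct and matches the paper's own argument exactly: the paper declares the corollary ``immediate from \eqref{2.3} and \eqref{2.1},'' which is precisely your combination of the Pr\"ufer identity with the implication $u\in L^2 \Rightarrow u'\in L^2$ for the forward direction and the pointwise bound $k^2 u(x)^2 \le k^2 R_k(x)^2$ for the reverse. Nothing is missing; you have simply written out the steps the paper leaves implicit.
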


\section{Proof  of Theorem~\ref{T1.1}}\label{S3}

The basis of all the estimates will be the following inequality:

\begin{lemma}\label{L3.1}
Let $x<y$ and assume $\omega \in \mathbb{C}$, $u(x)\neq 0$, and $\Re[ \bar \omega u(t)] \ge 0$ for $t \in [x,y]$. Then
\begin{equation}\label{3.1}
\Re[\bar \omega u(y)] \ge \Re[\bar\omega u(x)] + (y-x) \Re[ \bar\omega u'(x)] - C_2  (y-x) (y-x+1)\lvert \omega \rvert \max_{x\le t \le y} \lvert u(t) \rvert
\end{equation}
\end{lemma}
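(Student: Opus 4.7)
The plan is to introduce $f(t) = \Re[\bar\omega u(t)]$, which is nonnegative on $[x,y]$ by hypothesis, and to view the conclusion as a second-order Taylor expansion of $f$ about $x$. Using the ODE $u'' = (V-E) u$ and the fact that $V$ is real-valued, one computes $f''(t) = V(t) f(t) - \Re[E \bar\omega u(t)]$ (note $f'$ is absolutely continuous since $u' \in \AC_\loc$). Then the Taylor formula with integral remainder gives
\begin{equation*}
f(y) = f(x) + (y-x) f'(x) + \int_x^y (y-t) f''(t)\, dt,
\end{equation*}
and matching against \eqref{3.1} reduces the task to the lower bound
\begin{equation*}
\int_x^y (y-t) f''(t)\, dt \ge - C_2 (y-x)(y-x+1)\, |\omega|\, \max_{t\in[x,y]} |u(t)|.
\end{equation*}

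For the $V(t) f(t)$ contribution to $f''$, I would split $V = V_+ - V_-$ and discard the nonnegative term $V_+ f \ge 0$, which is legitimate thanks to the hypothesis $f\ge 0$ on $[x,y]$. What remains is an integral against $-V_-$, which I bound using $|f(t)| \le |\omega| |u(t)|$ and $(y-t) \le (y-x)$, reducing everything to estimating $\int_x^y V_-(t)\,dt$. For the $\Re[E \bar\omega u(t)]$ contribution, I would just use the crude bound $|\Re[E \bar\omega u(t)]| \le |E||\omega|\max_{[x,y]}|u|$ and integrate the weight $(y-t)$ directly, picking up a factor of $(y-x)^2/2$.

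The main quantitative input is the partitioning estimate $\int_x^y V_-(t)\,dt \le (y-x+1) C_1$, obtained by covering $[x,y]$ by at most $\lceil y-x\rceil \le y-x+1$ unit subintervals and applying \eqref{1.2} on each. Combining the two pieces gives a lower bound of $-\bigl(C_1(y-x)(y-x+1) + |E|(y-x)^2/2\bigr) |\omega| \max_{[x,y]}|u|$. A cosmetic final step absorbs $(y-x)^2/2$ into $(y-x)(y-x+1)$ (valid since $(y-x)/2 \le y-x+1$), recombining the coefficients into exactly $C_2 = C_1 + |E|$ as desired. I expect the only non-routine point to be the covering argument for $\int V_-$; the rest is calculus and bookkeeping inside the Taylor remainder.
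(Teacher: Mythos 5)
Your proposal is correct and follows essentially the same route as the paper: a second-order Taylor expansion with integral remainder, discarding the $V_+$ term using the sign hypothesis, bounding the $V_-$ term via the unit-interval covering estimate $\int_x^y V_-(t)\,dt \le (y-x+1)C_1$, and crudely absorbing the $E$ term. The only cosmetic differences are that you Taylor-expand $f=\Re[\bar\omega u]$ rather than $u$ itself and retain the sharper factor $(y-x)^2/2$ before absorbing it, neither of which changes the argument.
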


\begin{proof}
Using absolute continuity of $u$ and $u'$,
\begin{align}
u(y) & = u(x) + \int_x^y \left[u'(x) + \int_x^t u''(s) ds \right] dt \nonumber \\
& = u(x) + (y-x) u'(x) +  \int_x^y (y-s) u''(s) ds \label{3.2}
\end{align}
Denoting $M =  \max_{x\le t \le y} \lvert u(t)\rvert$, we have $0 \le \Re[\bar \omega u(s)] \le \lvert \bar \omega u(s) \rvert  \le \lvert \omega \rvert M$ for $s \in [x,y]$, so by $u'' = V u - Eu$,
\begin{align*}
\Re \left[ \bar \omega \int_x^y (y-s) u''(s) ds \right]  & = \int_x^y (y-s)  V(s) \Re \left[ \bar \omega  u(s)  \right] ds - \int_x^y (y-s) \Re \left[ \bar \omega  E u(s) \right] ds   \\
& \ge - \lvert \omega \rvert M (y-x) \int_x^y V_-(s) ds - \lvert \omega E \rvert M (y-x)^2 \nonumber \\
& \ge - \lvert \omega \rvert M (y-x) (y-x+1) (C_1 + \lvert E\rvert) 
\end{align*}
which together with \eqref{3.2} proves \eqref{3.1}.
\end{proof}

\begin{proof}[Proof of Theorem~\ref{T1.1}]
(i)  Without loss of generality, assume $\Re[\ol{u(x)} u'(x)]\ge 0$ (the other case follows by considering $u(-x)$).

Let $M = \max_{x-K \le y \le x+K} \lvert u(y) \rvert$. Assume that, contrary to \eqref{1.3}, we have \begin{equation}\label{3.3}
\lvert u'(x) \rvert > C_2 (1+2 K) M.
\end{equation}
Denote $f(y)=\Re[\ol{u'(x)} u(y)]$. Applying Lemma~\ref{L3.1} with $\omega = u'(x)$, we have
\begin{align}
f(y) & \ge f(x) + (y-x) \lvert u'(x)\rvert^2 - C_2 (y-x) (y-x+1) \lvert u'(x) \rvert M  \nonumber \\
& \ge M (y-x) \lvert u'(x)\rvert \bigl(\lvert u'(x)\rvert  - C_2 (y-x+1) \bigr) \label{3.4}
\end{align}
for $y\in [x,x+K]$ such that $f(t) \ge 0$ for all $t\in [x,y]$. 

Note that $f$ is continuous, $f(x)\ge 0$ and $f'(x) = \Re[\ol{u'(x)} u'(x)]>0$, so $f>0$ in some interval $(x,x+\epsilon)$.  We claim that $f>0$ in $(x,x+K]$; assume to the contrary, that there exists $y\in (x,x+K]$ such that $f(y)=0$, and pick the smallest such $y$. Then $f\ge 0$ on $[x,y]$, so by \eqref{3.4} and \eqref{3.3},
\begin{equation}\label{3.5}
f(y) > M (y-x) \lvert u'(x) \rvert C_2 (2K - (y-x)) > 0
\end{equation}
contradicting our assumption and proving $f>0$ on $(x,x+K]$. Taking $y= x+ K$ in \eqref{3.5}, we have
\[
\Re[\ol{u'(x)} u(x+K)]  >  C_2 M K^2 \lvert u'(x) \rvert = M \lvert u'(x) \rvert \ge \lvert \ol{u'(x)} u(x+K) \rvert
\]
which is a contradiction. Thus, the initial assumption \eqref{3.3} is wrong.

(ii) Assume the contrary; then there exists $y \in (x,x+\delta)$ such that $\lvert  u(y)\rvert =\frac{\lvert u(x)\rvert}2$. Let $z\in [x,y)$ be such that
\[
\lvert u(z)\rvert = \max_{t \in [x,y]} \lvert u(t)\rvert.
\]
Since $\Re[\ol{u(t)} u'(t)]=\frac 12 \frac d{dt} \lvert u(t) \rvert^2$, we have $\Re[\ol{u(z)} u'(z)] = 0$ (this is true even if $z=x$ since we know a priori that $\Re[\ol{u(x)} u'(x)] \ge 0$). Note also
\begin{equation}\label{3.6}
\Re[ \ol{u(z)} u(y) ] \le \lvert \ol{u(z)}  u(y)\rvert \le \frac{\lvert u(z)\rvert^2}2,
\end{equation}
so we may pick $t \in (z,y]$ as the smallest $t>z$ with $\Re[ \ol{u(z)} u(t) ] = \frac{\lvert u(z) \rvert^2}2$. 

Using (i) with $x$ replaced by $z$ and $y$ replaced by $t$, and with $\omega = u(z)$ gives
\begin{align*}
\Re [ \ol{u(z)} u(t) ] & \ge \lvert u(z)\rvert^2 [ 1 - C_2 (t-z)(t-z+1)]  \\
& > \lvert u(z)\rvert^2 [1 - C_2 \delta (\delta+1)] \\
& = \frac{\lvert u(z)\rvert^2}2
\end{align*}
where we used $t-z\le y-x < \delta$. This is a contradiction with \eqref{3.6}, which completes the proof.

(iii) For $\Re[ \ol{u(x)} u'(x)] \ge 0$, the claim follows directly from (ii) by 
taking the $p$-th power of \eqref{1.4} and integrating from $x$ to $x+\delta$.
The case $\Re[ \ol{u(x)} u'(x)] < 0$ follows by symmetry, by considering $u(- x)$.

(iv) This follows directly from (i) and (iii).

(v) We start with \eqref{1.3} for $p=\infty$ or \eqref{1.6} for $p<\infty$, and multiply by $w(x) \le C_3 w(y)$. For $p<\infty$, integrating in $x$ and using Tonelli's theorem completes the proof.

(vi) If $u\in L^p$ with $p<\infty$, then the right hand sides of \eqref{1.5}, \eqref{1.6} converge to $0$ as $x\to \pm \infty$, so the left hand sides also converge to $0$.
\end{proof}

\bibliographystyle{amsplain}

\begin{thebibliography}{1}

\bibitem{Behncke}
Horst Behncke, \emph{Absolute continuity of {H}amiltonians with won {N}eumann
  {W}igner potentials}, Manuscripta Math. \textbf{71} (1991), 163--181.

\bibitem{GilbertPearson}
Daphne~J. Gilbert and David~B. Pearson, \emph{On subordinacy and analysis of
  the spectrum of one-dimensional {S}chr\"odinger operators}, J. Math. Anal.
  Appl. \textbf{128} (1987), no.~1, 30--56.

\bibitem{KiselevLastSimon}
Alexander Kiselev, Yoram Last, and Barry Simon, \emph{Modified {P}r\"ufer and
  {EFGP} transforms and the spectral analysis of one-dimensional
  {S}chr\"odinger operators}, Comm. Math. Phys. \textbf{194} (1998), no.~1,
  1--45.

\bibitem{Prufer}
Heinz Pr{\"u}fer, \emph{Neue {H}erleitung der {S}turm-{L}iouvilleschen
  {R}eihenentwicklung stetiger {F}unktionen}, Math. Ann. \textbf{95} (1926),
  no.~1, 499--518.

\bibitem{SimonStolz}
Barry~Simon and G{\"u}nter~Stolz, \emph{Operators with singular continuous spectrum. {V}.\
  {S}parse potentials}, Proc. Amer. Math. Soc. \textbf{124} (1996), no.~7,
  2073--2080.

\bibitem{Simon}
Barry Simon, \emph{Bounded eigenfunctions and absolutely continuous spectra for
  one-dimensional {S}chr\"odinger operators}, Proc. Amer. Math. Soc.
  \textbf{124} (1996), 3361--3369.

\bibitem{Stolz92}
G{\"u}nter Stolz, \emph{Bounded solutions and absolute continuity of
  {S}turm--{L}iouville operators}, J . Math. Anal. Appl. \textbf{169} (1992),
  210--228.

\bibitem{Stolz95}
\bysame, \emph{Localization for random {S}chr\"odinger operators with {P}oisson
  potential}, Ann. Inst. H. Poincar\'e Phys. Th\'eor. \textbf{63} (1995),
  no.~3, 297--314.

\bibitem{Teschl}
Gerald Teschl, \emph{Mathematical methods in quantum mechanics with
  applications to schr{\"o}dinger operators}, Graduate Studies in Mathematics,
  vol.~99, American Mathematical Society, Providence, RI, 2009.

\end{thebibliography}

\end{document}